\newtheorem{thm}{Theorem}[section]
\newtheorem{lem}[thm]{Lemma}
\newtheorem{defn}[thm]{Definition}
\newtheorem{rem}[thm]{Remark}
\newtheorem{ex}[thm]{Example}
\newenvironment{keywords}{\begin{center}
\begin{minipage}[c]{13.4cm} {\bf Keywords:}} {\end{minipage}
\end{center}}
\begin{document}

\title{Fractional variational calculus 
for nondifferentiable functions\thanks{Submitted 13-Aug-2010; 
revised 24-Nov-2010; accepted 28-March-2011; for publication in 
\emph{Computers and Mathematics with Applications}.}}

\author{Ricardo Almeida\\
\texttt{ricardo.almeida@ua.pt}
\and Delfim F. M. Torres\\
\texttt{delfim@ua.pt}}

\date{Department of Mathematics, University of Aveiro,\\
Campus Universit\'{a}rio de Santiago, 3810-193 Aveiro, Portugal}

\maketitle


\begin{abstract}
We prove necessary optimality conditions,
in the class of continuous functions,
for variational problems defined with Jumarie's
modified Riemann--Liouville derivative.
The fractional basic problem of the calculus of variations
with free boundary conditions is considered,
as well as problems with isoperimetric and holonomic constraints.
\end{abstract}

\begin{keywords}
fractional calculus,
Jumarie's modified Riemann--Liouville derivative,
natural boundary conditions,
isoperimetric problems,
holonomic constraints.
\end{keywords}

\maketitle


\section{Introduction}

There exists a vast literature on different definitions of fractional derivatives.
The most popular ones are the Riemann--Liouville and the Caputo derivatives.
Each fractional derivative presents some advantages and disadvantages
(see, \textrm{e.g.}, \cite{Miller,Podlubny,samko}). The Riemann--Liouville
derivative of a constant is not zero while Caputo's derivative of a constant
is zero but demands higher conditions of regularity for differentiability:
to compute the fractional derivative of a function in the Caputo sense,
we must first calculate its derivative. Caputo derivatives
are defined only for differentiable functions while
functions that have no first order derivative might have fractional
derivatives of all orders less than one
in the Riemann--Liouville sense \cite{Ross:Samko:Love}.

Recently, Guy Jumarie
(see \cite{Jumarie1,Jumarie2,Jumarie:2007,Jumarie3,Jumarie:AML2,Jumarie:2010,Jumarie8})
proposed a simple alternative definition to the Riemann--Liouville derivative.
His modified Riemann--Liouville derivative has the advantages
of both the standard Riemann--Liouville and Caputo
fractional derivatives: it is defined for arbitrary continuous
(nondifferentiable) functions and the fractional derivative
of a constant is equal to zero. Here we show that Jumarie's derivative
is more advantageous for a general theory of the calculus of variations.

The fractional calculus of variations is a recent research area
much in progress. It is being mainly developed
for Riemann--Liouville (see, \textrm{e.g.},
\cite{AGRA1,AlmeidaTorres,MyID:154,Baleanu,frederico,MyID:181})
and Caputo derivatives (see, \textrm{e.g.},
\cite{agrawalCap,MyID:147,Baleanu:Agrawal,MyID:149,MyID:169,MyID:163}).
For more on the calculus of variations,
in terms of other fractional derivatives, we refer the reader
to \cite{MyID:152,MyID:179,Cresson,El-Nabulsi:Torres,Klimek}
and references therein.

As pointed out in \cite{Ata:et:al},
the fractional calculus of variations in
Riemann--Liouville sense, as it is known,
has some problems, and results should be used with care.
Indeed, in order for the Riemann--Liouville derivatives
${_aD_x^\alpha}y(x)$ and  ${_xD_b^\alpha}y(x)$ to be continuous
on a closed interval $[a,b]$, the boundary conditions $y(a)=0$ and $y(b)=0$
must be satisfied \cite{Ross:Samko:Love}. This is very restrictive when working
with variational problems of minimizing or maximizing functionals subject to arbitrarily
given boundary conditions, as often done in the calculus of variations
(see Proposition 1 and Remark 2 of \cite{MyID:154}).
With Jumarie's fractional derivative this situation does not occur,
and one can consider general boundary conditions $y(a)=y_a$ and $y(b)=y_b$.

The paper is organized as follows. In Section~\ref{sec:pre}
we state the assumptions, notations, and
the results of the literature needed in the sequel.
Section~\ref{sec3} reviews Jumarie's fractional
Euler--Lagrange equations \cite{Jumarie5}.
Our contribution is then given in Section~\ref{sec4}:
in \S\ref{sec5} we consider the case when no boundary conditions
are imposed on the problem, and we prove associated transversality
(natural boundary) conditions; optimization with constraints
(integral or not) are studied in sections \S\ref{sec6} and \S\ref{sec7}.
Finally, in Section~\ref{sec:Because_of_Reviewer2} we explain the novelties
of our results with respect to previous results in the literature.


\section{Preliminaries on Jumarie's Riemann--Liouville derivative}
\label{sec:pre}

Throughout the text $f:[0,1]\to\mathbb R$ is a continuous function
and $\alpha$ a real number on the interval $(0,1)$.
Jumarie's modified Riemann--Liouville fractional derivative is defined by
$$
f^{(\alpha)}(x)
=\frac{1}{\Gamma(1-\alpha)}\frac{d}{dx}
\int_0^x(x-t)^{-\alpha}(f(t)-f(0))\,dt.
$$
If $f(0)=0$, then $f^{(\alpha)}$ is equal to the Riemann--Liouville
fractional derivative of $f$ of order $\alpha$. We remark that the
fractional derivative of a constant is zero, as desired. Moreover,
$f(0)=0$ is no longer a necessary condition for the fractional
derivative of $f$ to be continuous on $[0,1]$.

The $(dt)^\alpha$ integral of $f$ is defined as follows:

$$\int_0^xf(t)(dt)^\alpha=\alpha\int_0^x(x-t)^{\alpha-1}f(t)dt.$$
For a motivation of this definition, we refer to \cite{Jumarie1}.

\begin{rem}
This type of fractional derivative and integral
has found applications in some physical phenomena.
The definition of the fractional derivative via difference reads
$$
f^{(\alpha)}(x)=\lim_{h\downarrow0}\frac{\triangle ^\alpha f(x)}{h^\alpha},
\quad 0<\alpha<1 \, ,
$$
and obviously this contributes some questions on the sign of $h$,
as it is emphasized by the fractional Rolle's formula
$f(x+h) \cong f(x)+h^\alpha f^{(\alpha)}(x)$.
In a first approach, in a realm of physics, when $h$ denotes time,
then this feature could picture the irreversibility of time. The fractional derivative
is quite suitable to describe dynamics evolving in space which exhibit coarse-grained phenomenon.
When the point in this space is not infinitely thin but rather a thickness,
then it would be better to replace  $dx$ by $(dx)^\alpha$ , $0 <\alpha<1$,
where $\alpha$ characterizes the grade of the phenomenon. The fractal feature of the space
is transported on time, and so both space and time are fractal. Thus, the increment of time
of the dynamics of the system is not $dx$ but $(dx)^\alpha$. For more on the subject see,
\textrm{e.g.}, \cite{AlmeidaMalinowskaTorres,Filatova,Jumarie5,Jumarie6,Jumarie:2010}.
\end{rem}

Our results make use of the formula of integration by parts for the $(dx)^\alpha$ integral.
This formula follows from the fractional Leibniz rule and the fractional Barrow's formula.

\begin{thm}[Fractional Leibniz rule \cite{Jumarie4}]
\label{thm:FLR}
If $f$ and $g$ are two continuous functions on $[0,1]$, then
\begin{equation}
\label{leibniz}
(f(x)g(x))^{(\alpha)}=(f(x))^{(\alpha)}g(x)+f(x)(g(x))^{(\alpha)}.
\end{equation}
\end{thm}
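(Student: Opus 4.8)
The plan is to reduce the statement to the classical Leibniz rule for ordinary derivatives via the difference-quotient characterization of $f^{(\alpha)}$ recalled in the Remark, namely $f^{(\alpha)}(x)=\lim_{h\downarrow 0}\triangle^{\alpha}f(x)/h^{\alpha}$, where $\triangle^{\alpha}$ is the fractional-order difference operator. The key algebraic identity I would exploit is the fractional product expansion $\triangle^{\alpha}\bigl(f(x)g(x)\bigr)=\sum_{k\ge 0}\binom{\alpha}{k}\triangle^{k}f(x)\,\triangle^{\alpha-k}g(x)$ (a Leibniz-type formula for finite differences), or, more directly, the first-order fractional Taylor/Rolle expansion $f(x+h)\cong f(x)+h^{\alpha}f^{(\alpha)}(x)$ for $0<\alpha<1$, which holds for continuous functions in Jumarie's calculus.

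First I would write, for $h\downarrow 0$,
\[
f(x+h)g(x+h)\cong\bigl(f(x)+h^{\alpha}f^{(\alpha)}(x)\bigr)\bigl(g(x)+h^{\alpha}g^{(\alpha)}(x)\bigr)
= f(x)g(x)+h^{\alpha}\bigl(f^{(\alpha)}(x)g(x)+f(x)g^{(\alpha)}(x)\bigr)+h^{2\alpha}f^{(\alpha)}(x)g^{(\alpha)}(x).
\]
Since $0<\alpha<1$, the term $h^{2\alpha}$ is of order $o(h^{\alpha})$ as $h\downarrow 0$, so it is negligible at the scale $h^{\alpha}$. Comparing with the fractional Rolle expansion $f(x+h)g(x+h)\cong f(x)g(x)+h^{\alpha}(fg)^{(\alpha)}(x)$ and matching the coefficient of $h^{\alpha}$ then yields \eqref{leibniz}. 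An alternative route, which I would keep in reserve, is to start from the integral definition: write $(fg)^{(\alpha)}(x)=\frac{1}{\Gamma(1-\alpha)}\frac{d}{dx}\int_0^x (x-t)^{-\alpha}\bigl(f(t)g(t)-f(0)g(0)\bigr)\,dt$, insert $f(t)g(t)-f(0)g(0)=\bigl(f(t)-f(0)\bigr)g(t)+f(0)\bigl(g(t)-g(0)\bigr)$, and then manipulate the resulting convolution integrals using the semigroup/fractional-integration-by-parts properties; but this second approach requires more care with regularity and with the non-commuting pieces.

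The main obstacle I anticipate is the rigorous justification of discarding the $h^{2\alpha}$ term and, more fundamentally, the legitimacy of multiplying two fractional Taylor expansions and of the $\cong$ relation itself for merely continuous (nondifferentiable) $f$ and $g$: the symbol $\cong$ encodes an asymptotic equivalence whose uniformity and error control are delicate in Jumarie's framework. I would therefore either (i) cite \cite{Jumarie4} for the precise meaning of the fractional Taylor expansion and the admissible error estimates, and carefully state that the $h^{2\alpha}=o(h^{\alpha})$ cancellation is valid in that sense, or (ii) fall back on the finite-difference Leibniz identity for $\triangle^{\alpha}$, take the limit $h\downarrow 0$ term by term (noting that $\triangle^{k}f(x)=O(h)$ for $k\ge 1$ kills all $k\ge 1$ contributions after division by $h^{\alpha}$), and thereby isolate exactly the two surviving terms $f^{(\alpha)}g$ and $fg^{(\alpha)}$. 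Either way, the substantive content is the scale separation between $h^{\alpha}$ and $h^{2\alpha}$; everything else is bookkeeping.
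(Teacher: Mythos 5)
The paper offers no proof of Theorem~\ref{thm:FLR}---it is imported from \cite{Jumarie4}---so there is no in-paper argument to compare against; your proposal essentially reconstructs Jumarie's own heuristic. The obstacle you flag at the end is not a bookkeeping issue but the point where the argument genuinely fails. Your derivation needs the expansion $f(x+h)=f(x)+h^{\alpha}f^{(\alpha)}(x)+o(h^{\alpha})$ to hold with $f^{(\alpha)}$ being the \emph{integral-defined} modified Riemann--Liouville derivative of Section~\ref{sec:pre}; but that object and the local difference-quotient limit in the Remark are different operators and do not agree for general continuous $f$. Concretely, take $f(x)=x$ at a point $x>0$: then $f(x+h)-f(x)=h=o(h^{\alpha})$ since $\alpha<1$, so any expansion of the form $f(x+h)=f(x)+c(x)h^{\alpha}+o(h^{\alpha})$ forces $c(x)=0$, whereas the integral definition gives $x^{(\alpha)}=x^{1-\alpha}/\Gamma(2-\alpha)\neq 0$. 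The first-order fractional Taylor formula you rely on is therefore simply unavailable for this $f$, and the coefficient-matching step has nothing to match. The finite-difference fallback inherits the same defect, since it again identifies $f^{(\alpha)}$ with the limit of $\triangle^{\alpha}f/h^{\alpha}$.

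Worse, for the derivative actually defined in this paper the identity \eqref{leibniz} is false, so no repair is possible. With $f(x)=g(x)=x$ (both vanish at $0$, so the modified and classical Riemann--Liouville derivatives coincide) one computes $(x^{2})^{(\alpha)}=2x^{2-\alpha}/\Gamma(3-\alpha)$, while $f^{(\alpha)}g+fg^{(\alpha)}=2x^{2-\alpha}/\Gamma(2-\alpha)$; since $\Gamma(3-\alpha)=(2-\alpha)\Gamma(2-\alpha)$ and $2-\alpha\neq 1$ for $\alpha\in(0,1)$, the two sides differ for all $x>0$. Your reserve route through the integral definition would run into exactly this discrepancy in the cross terms you call the ``non-commuting pieces.'' The honest conclusion is that \eqref{leibniz} can only hold on a restricted class of functions (those whose increments genuinely scale like $h^{\alpha}$ with the integral-defined coefficient), not for arbitrary continuous $f$ and $g$, and any proof must begin by delimiting that class rather than by invoking the Taylor expansion for all continuous functions.
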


Kolwankar obtained the same formula \eqref{leibniz}
by using an approach on Cantor space \cite{MR2109983}.

\begin{thm}[Fractional Barrow's formula \cite{Jumarie3}]
\label{thm:FBF}
For a continuous function $f$, we have
$$\int_0^xf^{(\alpha)}(t)(dt)^{(\alpha)}=\alpha! (f(x)-f(0)),$$
where $\alpha!=\Gamma(1+\alpha)$.
\end{thm}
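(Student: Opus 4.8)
The plan is to unwind both definitions stated above and reduce the claim to the classical composition rule between the Riemann--Liouville fractional integral and derivative. Set $g := f - f(0)$, so that $g$ is continuous on $[0,1]$ with $g(0)=0$; directly from the definition of Jumarie's derivative, $f^{(\alpha)}(x) = \frac{1}{\Gamma(1-\alpha)}\frac{d}{dx}\int_0^x (x-t)^{-\alpha} g(t)\,dt = \frac{d}{dx}\bigl(I^{1-\alpha}g\bigr)(x)$, where $I^{\beta}h(x) := \frac{1}{\Gamma(\beta)}\int_0^x (x-t)^{\beta-1}h(t)\,dt$ denotes the left Riemann--Liouville fractional integral of order $\beta>0$ (and $I^1 h(x) = \int_0^x h$). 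Plugging this into the definition of the $(dt)^\alpha$ integral and recalling $\alpha\,\Gamma(\alpha) = \Gamma(1+\alpha) = \alpha!$, one obtains
$$
\int_0^x f^{(\alpha)}(t)\,(dt)^\alpha
= \alpha \int_0^x (x-t)^{\alpha-1} f^{(\alpha)}(t)\,dt
= \alpha!\,\bigl(I^\alpha f^{(\alpha)}\bigr)(x)
= \alpha!\,\Bigl(I^\alpha \tfrac{d}{dx} I^{1-\alpha} g\Bigr)(x),
$$
so the theorem reduces to the identity $\bigl(I^\alpha \tfrac{d}{dx} I^{1-\alpha} g\bigr)(x) = g(x)$.

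To prove that identity I would first note that $\bigl(I^{1-\alpha}g\bigr)(0^+) = 0$: since $g$ is bounded on $[0,1]$, we have $\bigl|I^{1-\alpha}g(x)\bigr| \le \|g\|_\infty\, x^{1-\alpha}/\Gamma(2-\alpha) \to 0$ as $x \downarrow 0$. Under the standing assumption that $f^{(\alpha)}$ exists as a (say, continuous) function --- equivalently, that $I^{1-\alpha}g$ is absolutely continuous --- the fundamental theorem of calculus then gives $I^{1-\alpha}g(x) = \int_0^x f^{(\alpha)}(t)\,dt = \bigl(I^1 f^{(\alpha)}\bigr)(x)$. Applying $I^\alpha$ to both sides and invoking the semigroup law $I^\alpha I^\beta = I^{\alpha+\beta}$ for fractional integrals of continuous functions (which follows from Fubini together with the Beta-function evaluation $\int_s^x (x-t)^{\alpha-1}(t-s)^{-\alpha}\,dt = \Gamma(\alpha)\Gamma(1-\alpha)$), we get $I^1 g = I^\alpha I^{1-\alpha} g = I^{\alpha+1} f^{(\alpha)} = I^1\bigl(I^\alpha f^{(\alpha)}\bigr)$, i.e. $\int_0^x g = \int_0^x I^\alpha f^{(\alpha)}$. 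Differentiating (both integrands being continuous) yields $g = I^\alpha f^{(\alpha)} = I^\alpha \tfrac{d}{dx} I^{1-\alpha} g$, which is exactly the identity needed; combined with the display above, $\int_0^x f^{(\alpha)}(t)\,(dt)^\alpha = \alpha!\,g(x) = \alpha!\,(f(x)-f(0))$.

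I expect the main obstacle to be the composition step $I^\alpha \tfrac{d}{dx} I^{1-\alpha} g = g$, not the bookkeeping with Gamma constants: for a merely continuous $g$ the Riemann--Liouville derivative need not exist, and even when it does the general inversion formula carries a correction term $\tfrac{(I^{1-\alpha}g)(0^+)}{\Gamma(\alpha)}\,x^{\alpha-1}$. The point --- and precisely the feature that makes Jumarie's modified derivative convenient here --- is that the normalization $g = f - f(0)$ forces $(I^{1-\alpha}g)(0^+) = 0$, so this term drops and the clean Barrow formula survives. A secondary, purely technical point is to ensure that the iterated integrals arising when one interchanges the order of integration are absolutely convergent so that Fubini applies; continuity of $f$ on the compact interval $[0,1]$ makes this routine.
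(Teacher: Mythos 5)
The paper does not prove this statement---it is quoted as a known result of Jumarie (reference \cite{Jumarie3} in the bibliography) and used as a black box to derive the integration-by-parts formula---so there is no in-paper argument to compare against. Judged on its own terms, your proof is correct: the reduction $\int_0^x f^{(\alpha)}(t)\,(dt)^\alpha=\alpha!\,(I^\alpha f^{(\alpha)})(x)$ is just bookkeeping with $\alpha\Gamma(\alpha)=\Gamma(1+\alpha)$, and the remaining identity $I^\alpha\frac{d}{dx}I^{1-\alpha}g=g$ for $g=f-f(0)$ is exactly the classical Riemann--Liouville inversion formula, whose boundary correction term $\frac{(I^{1-\alpha}g)(0^+)}{\Gamma(\alpha)}x^{\alpha-1}$ you correctly observe vanishes because $g$ is bounded (so $(I^{1-\alpha}g)(0^+)=0$). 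Your self-contained derivation of that identity---FTC on the absolutely continuous function $I^{1-\alpha}g$, the semigroup law via Fubini and the Beta integral, then differentiation of $I^1g=I^1(I^\alpha f^{(\alpha)})$---is sound. The one point worth being explicit about, which you do flag, is that the theorem as stated (``for a continuous function $f$'') is not literally true without the standing hypothesis that $f^{(\alpha)}$ exists and is regular enough (continuity suffices) for $I^{1-\alpha}g$ to be absolutely continuous and for the $(dt)^\alpha$ integral of $f^{(\alpha)}$ to make sense; this implicit assumption is inherited from Jumarie's framework, where all the objects appearing in the formula are tacitly assumed to exist.
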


From Theorems~\ref{thm:FLR} and \ref{thm:FBF}
we deduce the following formula of integration by parts:
\begin{equation*}
\begin{array}{ll}\displaystyle \int_0^1 u^{(\alpha)}(x)v(x)\, (dx)^\alpha
&=\displaystyle\int_0^1 (u(x)v(x))^{(\alpha)}\, (dx)^\alpha
-\int_0^1 u(x)v^{(\alpha)}(x)\, (dx)^\alpha\\
&=\displaystyle\alpha! [u(x)v(x)]_0^1-\int_0^1 u(x)v^{(\alpha)}(x)\, (dx)^\alpha.
\end{array}
\end{equation*}

It has been proved that the fractional Taylor series holds for nondifferentiable functions.
See, for instance, \cite{Jumarie7}. Another approach is to check that this formula holds
for the Mittag--Leffler function, and then to consider functions which can be approximated by the former.
The first term of this series is the Rolle's fractional formula which has been obtained
by Kolwankar and Jumarie and provides the equality
$d^\alpha x(t)=\alpha! dx(t)$.

It is a simple exercise to verify that the fundamental lemma of the calculus
of variations is valid for the $(dx)^\alpha$ integral
(see, \textrm{e.g.}, \cite{Brunt} for a standard proof):

\begin{lem}
\label{fundLemma}
Let $g$ be a continuous function and assume that
$$\int_0^1g(x)h(x)\, (dx)^\alpha=0$$
for every continuous function $h$ satisfying $h(0)=h(1)=0$. Then $g \equiv0$.
\end{lem}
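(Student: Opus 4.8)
The plan is to argue by contradiction, exploiting the explicit representation of the $(dx)^\alpha$ integral as a weighted Riemann integral. By definition, for any continuous $\varphi$ we have $\int_0^1 \varphi(x)\,(dx)^\alpha=\alpha\int_0^1(1-x)^{\alpha-1}\varphi(x)\,dx$, so the hypothesis says precisely that $\int_0^1 w(x)\,g(x)\,h(x)\,dx=0$ for the weight $w(x):=\alpha(1-x)^{\alpha-1}$, which is continuous and strictly positive on $[0,1)$, for every admissible $h$. This reduces the statement to a mild variant of the classical fundamental lemma, but with a weight that is singular at $x=1$; keeping track of that singularity is the only delicate point.

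First I would suppose, for contradiction, that $g\not\equiv 0$ on $[0,1]$. Since $g$ is continuous, $g(0)$ and $g(1)$ are limits of interior values, so it is enough to produce a contradiction from the assumption that $g(c)\neq 0$ at some interior point $c\in(0,1)$; without loss of generality $g(c)>0$. By continuity of $g$, choose $\delta>0$ with $[c-\delta,c+\delta]\subset(0,1)$ and $g(x)>0$ for all $x$ in this subinterval.

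Next I would exhibit an explicit admissible competitor. Set $h(x)=(x-(c-\delta))^2\,((c+\delta)-x)^2$ for $x\in[c-\delta,c+\delta]$ and $h(x)=0$ otherwise. Then $h$ is continuous on $[0,1]$, satisfies $h(0)=h(1)=0$, is nonnegative, and is strictly positive on $(c-\delta,c+\delta)$. For this choice, $\int_0^1 g(x)h(x)\,(dx)^\alpha=\alpha\int_{c-\delta}^{c+\delta}(1-x)^{\alpha-1}g(x)h(x)\,dx$, and on $(c-\delta,c+\delta)$ the integrand is continuous and strictly positive, since there $0<x<1$ gives $(1-x)^{\alpha-1}>0$ while $g,h>0$. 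Hence the integral is strictly positive, contradicting the hypothesis; the case $g(c)<0$ is symmetric. Therefore $g\equiv 0$ on $(0,1)$, and by continuity $g\equiv 0$ on $[0,1]$.

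The step I expect to be the main (though minor) obstacle is handling the singular weight $(1-x)^{\alpha-1}$ at the right endpoint. This causes no real difficulty here precisely because the test functions $h$ can be taken compactly supported inside $(0,1)$, so $w$ is continuous and bounded away from $0$ on the relevant compact subinterval and one never has to integrate against the singularity; equivalently, one may simply apply the classical fundamental lemma (see, \textrm{e.g.}, \cite{Brunt}) to the continuous function $x\mapsto w(x)g(x)$ on a compact subinterval $[\varepsilon,1-\varepsilon]$ and then let $\varepsilon\to 0$.
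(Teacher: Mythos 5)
Your proof is correct. The paper offers no proof of this lemma at all --- it declares it ``a simple exercise'' and refers to \cite{Brunt} for the classical argument --- and your bump-function contradiction, reducing the $(dx)^\alpha$ integral to the weighted Riemann integral $\alpha\int_0^1(1-x)^{\alpha-1}g(x)h(x)\,dx$ and correctly observing that the singular weight is harmless because the test function can be supported in a compact subinterval of $(0,1)$, is exactly the intended adaptation of that standard proof.
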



\section{Jumarie's Euler--Lagrange equations}
\label{sec3}

Consider functionals
\begin{equation}
\label{funct}
\mathcal{J}(y)=\int_0^1
L\left(x,y(x),y^{(\alpha)}(x)\right) \,(dx)^\alpha
\end{equation}
defined on the set of continuous curves $y:[0,1]\to\mathbb R$,
where $L(\cdot,\cdot,\cdot)$ has continuous partial derivatives
with respect to the second and third variable. Jumarie
has addressed in \cite{Jumarie5} the basic problem
of calculus of variations: to minimize (or maximize)
$\mathcal{J}$, when restricted to the class of continuous curves satisfying
prescribed boundary conditions $y(0)=y_0$ and $y(1)=y_1$.
Let us denote this problem by $(P)$.
A necessary condition for problem $(P)$ is given by the next result.

\begin{thm}[The Jumarie fractional Euler--Lagrange equation \cite{Jumarie5}---scalar case]
\label{ELequationTheo}
If $y$ is a solution to the basic fractional problem
of the calculus of variations $(P)$, then
\begin{equation}
\label{ELequation}
\frac{\partial L}{\partial y}-\frac{d^\alpha}{dx^\alpha}
\frac{\partial L}{\partial y^{(\alpha)}} =0
\end{equation}
is satisfied along $y$, for all $x\in[0,1]$.
\end{thm}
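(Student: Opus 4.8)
The plan is to run the classical first-variation argument, transcribed into the $(dx)^\alpha$ calculus set up in Section~\ref{sec:pre}. First I would fix a solution $y$ of $(P)$ and perturb it by admissible variations: for $\varepsilon\in\mathbb R$ and an arbitrary continuous function $h:[0,1]\to\mathbb R$ with $h(0)=h(1)=0$, the curve $y+\varepsilon h$ is again continuous and still satisfies $y(0)=y_0$, $y(1)=y_1$, hence is a competitor in $(P)$. The structural fact that makes this work is that Jumarie's derivative is linear — immediate from its definition — so that $(y+\varepsilon h)^{(\alpha)}=y^{(\alpha)}+\varepsilon h^{(\alpha)}$.

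Next I would set $j(\varepsilon)=\mathcal J(y+\varepsilon h)$ and note that, since $y$ minimizes (or maximizes) $\mathcal J$ over the admissible class, $j$ has an interior extremum at $\varepsilon=0$, so $j'(0)=0$. Differentiating under the $(dx)^\alpha$ integral — which, since $\int_0^1 f(t)\,(dt)^\alpha=\alpha\int_0^1(1-t)^{\alpha-1}f(t)\,dt$, is just the ordinary Leibniz rule for a parameter-dependent integral, legitimate because $L$ has continuous partial derivatives in its second and third arguments and all curves are continuous — and applying the chain rule together with the linearity just noted, I obtain
\[
0=j'(0)=\int_0^1\left(\frac{\partial L}{\partial y}\,h(x)+\frac{\partial L}{\partial y^{(\alpha)}}\,h^{(\alpha)}(x)\right)(dx)^\alpha,
\]
where the partial derivatives of $L$ are evaluated along $\bigl(x,y(x),y^{(\alpha)}(x)\bigr)$.

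Then I would apply the integration-by-parts formula for the $(dx)^\alpha$ integral (deduced from Theorems~\ref{thm:FLR} and~\ref{thm:FBF}) to the second term, with $u=h$ and $v=\partial L/\partial y^{(\alpha)}$. Since $h(0)=h(1)=0$, the boundary term $\alpha!\,[h(x)\,\partial L/\partial y^{(\alpha)}]_0^1$ vanishes, leaving
\[
0=\int_0^1\left(\frac{\partial L}{\partial y}-\frac{d^\alpha}{dx^\alpha}\frac{\partial L}{\partial y^{(\alpha)}}\right)h(x)\,(dx)^\alpha .
\]
As $h$ ranges over all continuous functions with $h(0)=h(1)=0$, the fundamental lemma of the fractional calculus of variations (Lemma~\ref{fundLemma}) forces the bracketed factor to vanish identically on $[0,1]$, which is exactly~\eqref{ELequation}.

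The only genuinely delicate point I anticipate is justifying that $j$ is differentiable at $\varepsilon=0$ and that the derivative passes inside the $(dx)^\alpha$ integral, because the kernel $(1-t)^{\alpha-1}$ is singular at $t=1$; however, this singularity is integrable for $\alpha\in(0,1)$, and the regularity hypotheses on $L$ together with the continuity of $y$, $h$, $y^{(\alpha)}$, $h^{(\alpha)}$ on the compact interval $[0,1]$ supply the uniform bounds needed to invoke the classical differentiation-under-the-integral-sign theorem. Everything else is a routine copy of the standard derivation of the Euler--Lagrange equation.
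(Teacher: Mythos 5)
Your proposal is correct and follows exactly the argument the paper itself uses: Theorem~\ref{ELequationTheo} is stated without proof (cited to Jumarie), but the proof of Theorem~\ref{thm:bc} runs precisely your computation --- perturb by $\epsilon h$, set $j(\epsilon)=\mathcal{J}(y+\epsilon h)$, integrate by parts via Theorems~\ref{thm:FLR} and~\ref{thm:FBF}, kill the boundary term using $h(0)=h(1)=0$, and invoke Lemma~\ref{fundLemma}. Your added remarks on linearity of $y\mapsto y^{(\alpha)}$ and on differentiating under the $(dx)^\alpha$ integral despite the integrable singularity of $(1-t)^{\alpha-1}$ are sound and, if anything, more careful than the source.
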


\begin{defn}
A curve that satisfies equation \eqref{ELequation}
for all $x \in[0,1]$ is said to be an \emph{extremal}
for $\mathcal{J}$.
\end{defn}

\begin{ex}
\label{ex:1}
Consider the following problem:
$$
\mathcal{J}(y)=\int_0^1\left[
\frac{x^\alpha}{\Gamma(\alpha+1)}(y^{(\alpha)})^2
-2x^\alpha y^{(\alpha)} \right]^2\,(dx)^\alpha \longrightarrow  \textrm{extremize}
$$
subject to the boundary conditions
$$
y(0)=1 \mbox{ and } y(1)=2.
$$
The Euler--Lagrange equation associated to this problem is
\begin{equation}
\label{ELexample}
-\frac{d^\alpha}{dx^\alpha}\left(2\left[
\frac{x^\alpha}{\Gamma(\alpha+1)}(y^{(\alpha)})^2
-2x^\alpha y^{(\alpha)} \right]\cdot \left[
\frac{2x^\alpha}{\Gamma(\alpha+1)}y^{(\alpha)}
-2x^\alpha \right]\right)=0.
\end{equation}
Let $y=x^\alpha+1$. Since $y^{(\alpha)}=\Gamma(\alpha+1)$, it follows that $y$
is a solution of \eqref{ELexample}. We remark that the extremal curve
is not differentiable in $[0,1]$.
\end{ex}

\begin{rem}
When $\alpha=1$, we obtain the variational functional
$$
\mathcal{J}(y)=\int_0^1\left[ x (y')^2-2xy' \right]^2\,dx.
$$
It is easy to verify that $y=x+1$ satisfies the
(standard) Euler--Lagrange equation.
\end{rem}

We now present the Euler--Lagrange equation for functionals
containing several dependent variables.

\begin{thm}[The Jumarie fractional Euler--Lagrange equation \cite{Jumarie5}---vector case]
\label{theo}
Consider a functional $\mathcal{J}$, defined on the set of curves
satisfying the boundary conditions $\textbf{y}(0)=\textbf{y}_0$
and $\textbf{y}(1)=\textbf{y}_1$, of the form
$$
\mathcal{J}(\textbf{y})
=\int_0^1 L\left(x,\textbf{y}(x),\textbf{y}^{(\alpha)}(x)\right)
\,(dx)^\alpha,
$$
where $\textbf{y}=(y_1,\ldots,y_n)$,
$\textbf{y}^{(\alpha)}=(y_1^{(\alpha)},\ldots,y_n^{(\alpha)})$,
and $y_k$, $k=1,\ldots,n$, are continuous real valued functions defined on $[0,1]$.
Let $\textbf{y}$ be an extremizer of $\mathcal{J}$. Then,
$$
\frac{\partial L}{\partial y_k}\left(x,\textbf{y}(x),\textbf{y}^{(\alpha)}(x)\right)
-\frac{d^\alpha}{dx^\alpha} \frac{\partial L}{\partial y_k^{(\alpha)}}\left(x,
\textbf{y}(x),\textbf{y}^{(\alpha)}(x)\right)=0,
$$
$k=1,\ldots,n$, for all $x\in[0,1]$.
\end{thm}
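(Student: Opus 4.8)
The plan is to reduce the vector case to the scalar Euler--Lagrange equation of Theorem~\ref{ELequationTheo} by a standard coordinate-freezing argument. Fix an index $k \in \{1,\ldots,n\}$ and an extremizer $\textbf{y}=(y_1,\ldots,y_n)$. I would introduce a variation only in the $k$-th component: for a continuous function $h:[0,1]\to\mathbb R$ with $h(0)=h(1)=0$ and a real parameter $\varepsilon$, set $\textbf{y}^\varepsilon = (y_1,\ldots,y_{k-1}, y_k+\varepsilon h, y_{k+1},\ldots,y_n)$. Each such curve is continuous and satisfies the prescribed boundary conditions, so the function $\varphi(\varepsilon) = \mathcal{J}(\textbf{y}^\varepsilon)$ has a local extremum at $\varepsilon=0$, whence $\varphi'(0)=0$.

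Next I would compute $\varphi'(0)$ by differentiating under the $(dx)^\alpha$ integral sign, using that $L$ has continuous partial derivatives with respect to its second and third (vector) arguments and that $\alpha$-differentiation is linear, so $(y_k+\varepsilon h)^{(\alpha)} = y_k^{(\alpha)} + \varepsilon h^{(\alpha)}$. This yields
\begin{equation*}
\varphi'(0)=\int_0^1\left[\frac{\partial L}{\partial y_k}\left(x,\textbf{y},\textbf{y}^{(\alpha)}\right)h(x)
+\frac{\partial L}{\partial y_k^{(\alpha)}}\left(x,\textbf{y},\textbf{y}^{(\alpha)}\right)h^{(\alpha)}(x)\right](dx)^\alpha=0.
\end{equation*}
Applying the fractional integration by parts formula derived from Theorems~\ref{thm:FLR} and~\ref{thm:FBF} to the second term, and using $h(0)=h(1)=0$ to kill the boundary contribution $\alpha![\,\cdot\,h]_0^1$, gives
\begin{equation*}
\int_0^1\left[\frac{\partial L}{\partial y_k}\left(x,\textbf{y},\textbf{y}^{(\alpha)}\right)
-\frac{d^\alpha}{dx^\alpha}\frac{\partial L}{\partial y_k^{(\alpha)}}\left(x,\textbf{y},\textbf{y}^{(\alpha)}\right)\right]h(x)\,(dx)^\alpha=0.
\end{equation*}
Since this holds for every admissible $h$, the fundamental lemma of the calculus of variations (Lemma~\ref{fundLemma}) forces the bracket to vanish identically on $[0,1]$. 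As $k$ was arbitrary, this is precisely the asserted system.

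The argument is essentially the scalar proof applied componentwise, so there is little genuine difficulty; the main point requiring care is the justification of differentiating $\varphi$ under the integral sign and the legitimacy of the integration-by-parts step for the merely continuous (nondifferentiable) functions involved — in particular that $h^{(\alpha)}$ exists and that the product rule and Barrow formula apply to $u=\partial L/\partial y_k^{(\alpha)}$ composed along $\textbf{y}$. These are exactly the regularity facts supplied by the framework recalled in Section~\ref{sec:pre}, so invoking Theorems~\ref{thm:FLR}, \ref{thm:FBF} and Lemma~\ref{fundLemma} disposes of them. I would therefore present the proof tersely, noting only that the single-component variations suffice to obtain all $n$ equations.
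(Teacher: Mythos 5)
Your proof is correct: the single-component variation, differentiation of $\varphi$ at $\varepsilon=0$, fractional integration by parts with the boundary term killed by $h(0)=h(1)=0$, and Lemma~\ref{fundLemma} give exactly the asserted system, and this is the same machinery the paper itself deploys in the proof of Theorem~\ref{thm:bc}. Note that the paper states this vector-case theorem without proof, quoting it from \cite{Jumarie5}; your argument is the natural componentwise extension of the scalar argument and is consistent with the framework of Section~\ref{sec:pre}, so nothing further is needed.
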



\section{Main results}
\label{sec4}

We give new necessary optimality conditions for:
(i) functionals of form \eqref{funct}
with free boundary conditions (Theorem~\ref{thm:bc});
(ii) fractional isoperimetric problems of Jumarie (Theorem~\ref{thm:iso});
and (iii) Jumarie-type problems with subsidiary holonomic constraints
(Theorems~\ref{thm:hol} and \ref{thm:hol:gf}).


\subsection{Natural boundary conditions}
\label{sec5}

The problem is stated as follows. Given a functional
$$
\mathcal{J}(y)=\int_0^1L\left(x,y(x),y^{(\alpha)}(x)\right) \,(dx)^\alpha\, ,
$$
where the Lagrangian $L(\cdot,\cdot,\cdot)$ has continuous partial derivatives
with respect to the second and third variables, determine continuous curves
$y:[0,1]\to\mathbb R$ such that $\mathcal{J}$ has an extremum at $y$.
Note that no boundary conditions are now imposed.

\begin{thm}
\label{thm:bc}
Let $y$ be an extremizer for $\mathcal{J}$.
Then $y$ satisfies the Euler--Lagrange equation
\begin{equation}
\label{our:EL}
\frac{\partial L}{\partial y}
-\frac{d^\alpha}{dx^\alpha} \frac{\partial L}{\partial y^{(\alpha)}}=0
\end{equation}
for all $x\in[0,1]$ and the natural boundary conditions
\begin{equation}
\label{our:NBC}
\left. \frac{\partial L}{\partial y^{(\alpha)}}\right|_{x=0}=0
\quad \mbox{and} \quad \left. \frac{\partial L}{\partial y^{(\alpha)}}\right|_{x=1}=0.
\end{equation}
\end{thm}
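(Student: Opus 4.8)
The strategy is the classical variational one, adapted to Jumarie's $(dx)^\alpha$-calculus. Suppose $y$ is an extremizer of $\mathcal{J}$. Since no boundary conditions are imposed, I would embed $y$ into a one-parameter family $\bar y = y + \varepsilon h$, where now $h:[0,1]\to\mathbb R$ is an \emph{arbitrary} continuous function (not required to vanish at the endpoints). Define $j(\varepsilon)=\mathcal{J}(y+\varepsilon h)$; because $y$ is an extremizer, $j$ has an extremum at $\varepsilon=0$, so $j'(0)=0$. Differentiating under the $(dx)^\alpha$ integral sign and using the chain rule (together with the fact, from the fractional Leibniz/Rolle setting, that $(\bar y)^{(\alpha)} = y^{(\alpha)} + \varepsilon h^{(\alpha)}$ depends linearly on $\varepsilon$), I obtain
\begin{equation*}
0 = j'(0) = \int_0^1 \left( \frac{\partial L}{\partial y}\, h(x) + \frac{\partial L}{\partial y^{(\alpha)}}\, h^{(\alpha)}(x) \right) (dx)^\alpha .
\end{equation*}

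Next I would apply the fractional integration by parts formula derived in the excerpt from Theorems~\ref{thm:FLR} and \ref{thm:FBF}, with $u = \partial L/\partial y^{(\alpha)}$ and $v = h$, to rewrite the second term:
\begin{equation*}
\int_0^1 \frac{\partial L}{\partial y^{(\alpha)}}\, h^{(\alpha)}(x)\, (dx)^\alpha
= \alpha!\left[ \frac{\partial L}{\partial y^{(\alpha)}}\, h(x) \right]_0^1
- \int_0^1 \frac{d^\alpha}{dx^\alpha}\!\left(\frac{\partial L}{\partial y^{(\alpha)}}\right) h(x)\, (dx)^\alpha .
\end{equation*}
Substituting back gives
\begin{equation*}
0 = \alpha!\left[ \frac{\partial L}{\partial y^{(\alpha)}}\, h(x) \right]_0^1
+ \int_0^1 \left( \frac{\partial L}{\partial y} - \frac{d^\alpha}{dx^\alpha}\frac{\partial L}{\partial y^{(\alpha)}} \right) h(x)\, (dx)^\alpha .
\end{equation*}

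Finally I would exploit the arbitrariness of $h$ in the standard two-stage manner. First restrict to those $h$ with $h(0)=h(1)=0$: the boundary term vanishes, and Lemma~\ref{fundLemma} forces the Euler--Lagrange equation \eqref{our:EL} to hold on all of $[0,1]$. With \eqref{our:EL} established, the integral term vanishes for \emph{every} continuous $h$, leaving $\alpha!\,[ (\partial L/\partial y^{(\alpha)})\, h(x)]_0^1 = 0$ for all such $h$; choosing $h$ with $h(0)=1,\ h(1)=0$ and then $h(0)=0,\ h(1)=1$ yields the two natural boundary conditions \eqref{our:NBC}. The only delicate point is the first step: one must be sure that differentiation under the $(dx)^\alpha$ integral is legitimate and that $h\mapsto h^{(\alpha)}$ behaves linearly and is compatible with this differentiation — i.e. that the first variation has exactly the form written above. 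Granting the formalism of Jumarie's calculus set up in Section~\ref{sec:pre} (in particular the Leibniz rule and Barrow's formula, which are precisely what make the integration by parts valid for merely continuous functions), the rest is routine and mirrors the proof of Theorem~\ref{ELequationTheo}.
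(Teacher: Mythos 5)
Your proposal is correct and follows essentially the same route as the paper's proof: form $j(\epsilon)=\mathcal{J}(y+\epsilon h)$ for arbitrary continuous $h$, integrate by parts via the formula derived from Theorems~\ref{thm:FLR} and \ref{thm:FBF}, first take $h(0)=h(1)=0$ to get \eqref{our:EL} from Lemma~\ref{fundLemma}, then use endpoint-nonvanishing $h$ to extract \eqref{our:NBC}. Your explicit flagging of the differentiation-under-the-integral step is a point the paper passes over silently, but the argument is the same.
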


\begin{proof}
Let $h$ be any continuous curve and let
$j(\epsilon)=\mathcal{J}(y+\epsilon h)$. It follows that
$$
\begin{array}{ll}
0 & = \displaystyle\int_0^1\left(\frac{\partial L}{\partial y}\cdot h(x)
+ \frac{\partial L}{\partial y^{(\alpha)}} \cdot h^{(\alpha)}(x) \right)\,(dx)^\alpha\\
&\\
&  = \displaystyle\int_0^1\left(  \frac{\partial L}{\partial y}
-\frac{d^\alpha}{dx^\alpha} \frac{\partial L}{\partial y^{(\alpha)}} \right)\cdot h(x)
\,(dx)^\alpha+\alpha! \left[\frac{\partial L}{\partial y^{(\alpha)}}h(x)\right]_{x=0}^{x=1}.
\end{array}
$$
If we choose curves such that $h(0)=h(1)=0$, we deduce by Lemma~\ref{fundLemma}
the Euler--Lagrange equation \eqref{our:EL}. Then condition
$$
\left[\frac{\partial L}{\partial y^{(\alpha)}}h(x)\right]_{x=0}^{x=1}=0
$$
must be verified. Picking curves such that $h(0)=0$ and $h(1)\not=0$,
and others such that $h(1)=0$ and $h(0)\not=0$,
we deduce the natural boundary conditions \eqref{our:NBC}.
\end{proof}

If one of the endpoints is specified,
say $y(0)=y_0$, then the necessary conditions become
$$
\frac{\partial L}{\partial y}-\frac{d^\alpha}{dx^\alpha}
\frac{\partial L}{\partial y^{(\alpha)}}=0
$$
and
$$
\left. \frac{\partial L}{\partial y^{(\alpha)}}\right|_{x=1}=0.
$$

\begin{ex}
Let $\mathcal{J}$ be given by the expression
$$
\mathcal{J}(y)=\int_0^1\sqrt{1+y^{(\alpha)}(x)^2} \,(dx)^\alpha.
$$
The Euler--Lagrange equation  associated to this problem is
$$
\frac{d^\alpha}{dx^\alpha}\frac{y^{(\alpha)}(x)}{\sqrt{1+y^{(\alpha)}(x)^2}}=0
$$
and the natural boundary conditions are
$$
\left.\frac{y^{(\alpha)}(x)}{\sqrt{1+y^{(\alpha)}(x)^2}}\right|_{x=1}=0
\quad \mbox{and} \quad \left.\frac{y^{(\alpha)}(x)}{\sqrt{1+y^{(\alpha)}(x)^2}}\right|_{x=0}=0.
$$
Since $y^{(\alpha)}=0$ if $y$ is a constant function, we have
that any constant curve is a solution to this problem.
\end{ex}


\subsection{The isoperimetric problem}
\label{sec6}

The study of isoperimetric problems is an important area inside
the calculus of variations. One wants to find the extremizers of a given functional,
when restricted to a prescribed integral constraint. Problems of this type have
found many applications in differential geometry, discrete and convex geometry,
probability, Banach space theory, and multiobjective optimization
(see \cite{Ric:Del:ISO,Ric:Del:ISO:nabla,Malina} and references therein).
We introduce the isoperimetric fractional problem as follows:
to maximize or minimize the functional
$$
\mathcal{J}(y)=\int_0^1L(x,y(x),y^{(\alpha)}(x)) \,(dx)^\alpha
$$
when restricted to the conditions
\begin{equation}
\label{Isoconst}
\mathcal{G}(y)=\int_0^1f(x,y(x),y^{(\alpha)}(x)) \,(dx)^\alpha=K,\quad K \in \mathbb R,
\end{equation}
and
\begin{equation}
\label{boundconst}
y(0)=y_0 \mbox{ and } y(1)=y_1.
\end{equation}
Similarly as before, we assume that $L(\cdot,\cdot,\cdot)$
and $f(\cdot,\cdot,\cdot)$ have continuous partial derivatives
with respect to the second and third variables.

\begin{thm}
\label{thm:iso}
Let $y$ be an extremizer of $\mathcal{J}$ restricted to the set of curves
that satisfy conditions \eqref{Isoconst} and \eqref{boundconst}.
If $y$ is not an extremal for $\mathcal{G}$, then there exists a constant
$\lambda$ such that the curve $y$ satisfies the equation
$$
\frac{\partial F}{\partial y}
-\frac{d^\alpha}{dx^\alpha} \frac{\partial F}{\partial y^{(\alpha)}}=0
$$
for all $x\in[0,1]$, where $F=L-\lambda f$.
\end{thm}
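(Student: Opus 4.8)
The plan is to adapt the classical Lagrange multiplier technique for isoperimetric problems to the Jumarie setting, using the integration by parts formula and the fundamental lemma (Lemma~\ref{fundLemma}) established above. First I would introduce a two-parameter family of admissible variations
$$
\hat{y} = y + \epsilon_1 h_1 + \epsilon_2 h_2 ,
$$
where $h_1$ and $h_2$ are arbitrary continuous curves on $[0,1]$ with $h_i(0)=h_i(1)=0$, so that every $\hat{y}$ still satisfies the boundary conditions \eqref{boundconst}; by linearity of the Jumarie derivative one has $\hat{y}^{(\alpha)} = y^{(\alpha)} + \epsilon_1 h_1^{(\alpha)} + \epsilon_2 h_2^{(\alpha)}$. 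Put $j(\epsilon_1,\epsilon_2) = \mathcal{J}(\hat{y})$ and $g(\epsilon_1,\epsilon_2) = \mathcal{G}(\hat{y}) - K$, so that $g(0,0)=0$.

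Next I would fix a suitable $h_2$. Differentiating under the $(dx)^\alpha$ integral and integrating by parts,
$$
\left. \frac{\partial g}{\partial \epsilon_2}\right|_{(0,0)}
= \int_0^1 \left( \frac{\partial f}{\partial y} - \frac{d^\alpha}{dx^\alpha}\frac{\partial f}{\partial y^{(\alpha)}} \right) h_2 \,(dx)^\alpha ,
$$
the boundary term disappearing because $h_2(0)=h_2(1)=0$. Were this integral to vanish for every such $h_2$, Lemma~\ref{fundLemma} would force $y$ to be an extremal for $\mathcal{G}$, contrary to hypothesis; so we may fix one $h_2$ with $\partial g/\partial\epsilon_2(0,0)\neq 0$. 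Since $g$ is of class $C^1$ near the origin, $g(0,0)=0$, and $\partial g/\partial\epsilon_2(0,0)\neq 0$, the implicit function theorem provides a $C^1$ function $\epsilon_2=\epsilon_2(\epsilon_1)$, defined for $\epsilon_1$ near $0$, with $\epsilon_2(0)=0$ and $g(\epsilon_1,\epsilon_2(\epsilon_1))\equiv 0$; along this curve of variations the constraint \eqref{Isoconst} is respected.

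Because $y$ is an extremizer of $\mathcal{J}$ among admissible curves, $\epsilon_1\mapsto j(\epsilon_1,\epsilon_2(\epsilon_1))$ has an extremum at $\epsilon_1=0$, whence at the origin $\partial_{\epsilon_1}j + \partial_{\epsilon_2}j \cdot \epsilon_2'(0)=0$; differentiating $g(\epsilon_1,\epsilon_2(\epsilon_1))\equiv 0$ gives $\epsilon_2'(0) = -\partial_{\epsilon_1}g(0,0)/\partial_{\epsilon_2}g(0,0)$. Eliminating $\epsilon_2'(0)$ and setting $\lambda = \partial_{\epsilon_2}j(0,0)/\partial_{\epsilon_2}g(0,0)$ yields $\partial_{\epsilon_1}j(0,0) - \lambda\,\partial_{\epsilon_1}g(0,0)=0$. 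Writing this out, with $F = L - \lambda f$, and integrating by parts (the boundary term again vanishing since $h_1(0)=h_1(1)=0$),
$$
\int_0^1 \left( \frac{\partial F}{\partial y} - \frac{d^\alpha}{dx^\alpha}\frac{\partial F}{\partial y^{(\alpha)}} \right) h_1 \,(dx)^\alpha = 0 .
$$
As $h_1$ is an arbitrary continuous curve vanishing at the endpoints, Lemma~\ref{fundLemma} gives the asserted equation. The step I expect to require the most care is the analytic justification of what the classical argument treats as routine: differentiation under the $(dx)^\alpha$ integral sign and the $C^1$ dependence of $j$ and $g$ on $(\epsilon_1,\epsilon_2)$ needed to invoke the implicit function theorem, which here rest on the continuity of $y^{(\alpha)}$, $h_i^{(\alpha)}$ and of the partial derivatives of $L$ and $f$; the algebra producing $\lambda$ and the two applications of the fundamental lemma are then routine.
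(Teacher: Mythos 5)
Your proposal is correct and follows essentially the same route as the paper: a two-parameter variation, the implicit function theorem applied to $g$ using a direction $h_2$ in which $\mathcal{G}$ is not stationary, and then the fundamental lemma (Lemma~\ref{fundLemma}) applied to the $h_1$-derivative. The only cosmetic difference is that you derive the multiplier $\lambda=\partial_{\epsilon_2}j(0,0)/\partial_{\epsilon_2}g(0,0)$ explicitly by eliminating $\epsilon_2'(0)$, whereas the paper simply invokes the finite-dimensional Lagrange multiplier rule at $(0,0)$ — the content is the same.
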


\begin{proof}
Consider a variation curve of $y$ with two parameters,
say $y(x)+\epsilon_1h_1(x)+\epsilon_2h_2(x)$, where $h_1$ and $h_2$
are two continuous curves satisfying $h_i(0)=h_i(1)=0, \, i=1,2$.
Consider now two new functions defined in an open neighborhood of zero:
$$
j(\epsilon_1,\epsilon_2)=\mathcal{J}(y+\epsilon_1h_1+\epsilon_2h_2)
\mbox{ and } g(\epsilon_1,\epsilon_2)=\mathcal{G}(y+\epsilon_1h_1+\epsilon_2h_2)-K.
$$
Thus, $g(0,0)=0$ and
$$
\left. \frac{\partial g}{\partial\epsilon_2} \right|_{(0,0)}
=\int_0^1\left(  \frac{\partial f}{\partial y}
-\frac{d^\alpha}{dx^\alpha} \frac{\partial f}{\partial y^{(\alpha)}} \right)
\cdot h_2(x) \,(dx)^\alpha.
$$
Since $y$ is not an extremal for $\mathcal{G}$, there must exist some curve
$h_2$ such that $\partial g / \partial \epsilon_2 (0,0)\not=0$.
By the implicit function theorem we may write $g(\epsilon_1,\epsilon_2(\epsilon_1))=0$
for some function $\epsilon_2(\cdot)$ defined in an open neighborhood of zero.
Because $(0,0)$ is an extremum of $j$ subject to the constraint $g(\epsilon_1,\epsilon_2)=0$,
and $\nabla g(0,0)\not=0$, by the Lagrange multiplier rule
there exists some real $\lambda$ verifying the equation
$$
\nabla (j-\lambda g)(0,0)=0.
$$
In particular,
$$
\begin{array}{ll}
0 & = \displaystyle \left. \frac{\partial }{\partial\epsilon_1}(j-\lambda g) \right|_{(0,0)}\\
&\\
& =\displaystyle \int_0^1\left[  \frac{\partial L}{\partial y}
-\frac{d^\alpha}{dx^\alpha} \frac{\partial L}{\partial y^{(\alpha)}}
-\lambda \left(  \frac{\partial f}{\partial y}
-\frac{d^\alpha}{dx^\alpha} \frac{\partial f}{\partial y^{(\alpha)}} \right) \right]
\cdot h_1(x) \,(dx)^\alpha.
\end{array}
$$
By the arbitrariness of $h_1$ and by Lemma~\ref{fundLemma}, we have
$$
\frac{\partial L}{\partial y}-\frac{d^\alpha}{dx^\alpha} \frac{\partial L}{\partial y^{(\alpha)}}
-\lambda \left(  \frac{\partial f}{\partial y}
-\frac{d^\alpha}{dx^\alpha} \frac{\partial f}{\partial y^{(\alpha)}} \right)=0.
$$
Introducing $F=L-\lambda f$ we deduce that $y$ satisfies the equation
$$
\frac{\partial F}{\partial y}-\frac{d^\alpha}{dx^\alpha}
\frac{\partial F}{\partial y^{(\alpha)}} =0
$$
for all $x\in[0,1]$.
\end{proof}


\subsection{Holonomic constraints}
\label{sec7}

In \S\ref{sec6} the subsidiary conditions that the functions must satisfy
are given by integral functionals. We now consider a different type of problem:
find functions $y_1$ and $y_2$ for which the functional
\begin{equation}
\label{funct2}
\mathcal{J}(y_1,y_2)=\int_0^1L\left(x,y_1(x),
y_2(x),y_1^{(\alpha)}(x),y_2^{(\alpha)}(x)\right) \,(dx)^\alpha
\end{equation}
has an extremum, where the admissible functions satisfy the boundary conditions
\begin{equation}
\label{boundconst2}
(y_1(0),y_2(0))=(y_1^0,y_2^0) \mbox{ and } (y_1(1),y_2(1))=(y_1^1,y_2^1),
\end{equation}
and the subsidiary holonomic condition
\begin{equation}
\label{subsconst} g(x,y_1(x),y_2(x))=0.
\end{equation}

\begin{thm}
\label{thm:hol}
Given a functional $\mathcal{J}$ as in \eqref{funct2},
defined on the set of curves that satisfy the boundary conditions
\eqref{boundconst2} and lie on the surface \eqref{subsconst},
let $(y_1,y_2)$ be an extremizer for $\mathcal{J}$.
If $\partial g / \partial y_2\not=0$ for all $x\in[0,1]$,
then there exists a continuous function $\lambda(x)$
such that $(y_1,y_2)$ satisfy the Euler--Lagrange equations
\begin{equation}
\label{ELequation2}
\frac{\partial F}{\partial y_k}-\frac{d^\alpha}{dx^\alpha}
\frac{\partial F}{\partial y_k^{(\alpha)}}=0\, ,
\end{equation}
$k=1,2$, for all $x\in[0,1]$, where $F=L-\lambda g$.
\end{thm}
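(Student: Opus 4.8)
The plan is to follow the classical pattern for holonomic constraints, reducing to an unconstrained first--variation argument and producing a multiplier $\lambda$ that is now allowed to depend on $x$. First I would fix an arbitrary continuous curve $h_1$ with $h_1(0)=h_1(1)=0$ and use the hypothesis $\partial g/\partial y_2\neq 0$ to manufacture an admissible variation: applying the implicit function theorem pointwise in $x$ to the equation $g(x,y_1(x)+\epsilon h_1(x),y_2)=0$ yields a curve $y_2(x,\epsilon)$, defined for $|\epsilon|$ small, with $y_2(x,0)=y_2(x)$, such that the pair $(y_1+\epsilon h_1,\,y_2(\cdot,\epsilon))$ lies on the surface \eqref{subsconst} and, by uniqueness at the endpoints, still satisfies the boundary conditions \eqref{boundconst2}. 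Setting $h_2(x)=\partial y_2/\partial\epsilon\,(x,0)$ and differentiating the identity $g(x,y_1+\epsilon h_1,y_2(x,\epsilon))=0$ at $\epsilon=0$ gives the pointwise relation $\frac{\partial g}{\partial y_1}h_1+\frac{\partial g}{\partial y_2}h_2=0$; in particular $h_2(0)=h_2(1)=0$ and $h_2=-\bigl(\partial g/\partial y_1\bigr)\bigl(\partial g/\partial y_2\bigr)^{-1}h_1$.

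Next I would compute the first variation. Since $(y_1,y_2)$ is an extremizer, $\epsilon\mapsto\mathcal{J}(y_1+\epsilon h_1,\,y_2(\cdot,\epsilon))$ has a critical point at $\epsilon=0$, which gives
$$0=\int_0^1\left(\frac{\partial L}{\partial y_1}h_1+\frac{\partial L}{\partial y_2}h_2+\frac{\partial L}{\partial y_1^{(\alpha)}}h_1^{(\alpha)}+\frac{\partial L}{\partial y_2^{(\alpha)}}h_2^{(\alpha)}\right)(dx)^\alpha.$$
Applying the fractional integration by parts formula (consequence of Theorems~\ref{thm:FLR} and \ref{thm:FBF}) to the two $\alpha$-derivative terms, the boundary contributions vanish because $h_1(0)=h_1(1)=0$ and $h_2(0)=h_2(1)=0$, so that
$$0=\int_0^1\left[\left(\frac{\partial L}{\partial y_1}-\frac{d^\alpha}{dx^\alpha}\frac{\partial L}{\partial y_1^{(\alpha)}}\right)h_1+\left(\frac{\partial L}{\partial y_2}-\frac{d^\alpha}{dx^\alpha}\frac{\partial L}{\partial y_2^{(\alpha)}}\right)h_2\right](dx)^\alpha.$$

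Then I would introduce the multiplier by \emph{defining}
$$\lambda(x)=\left(\frac{\partial g}{\partial y_2}\right)^{-1}\left(\frac{\partial L}{\partial y_2}-\frac{d^\alpha}{dx^\alpha}\frac{\partial L}{\partial y_2^{(\alpha)}}\right),$$
which is continuous in $x$ by the regularity assumptions together with $\partial g/\partial y_2\neq 0$. By construction $\frac{\partial L}{\partial y_2}-\frac{d^\alpha}{dx^\alpha}\frac{\partial L}{\partial y_2^{(\alpha)}}=\lambda\frac{\partial g}{\partial y_2}$, and since $g$ is independent of $y_2^{(\alpha)}$ we have $\partial F/\partial y_2^{(\alpha)}=\partial L/\partial y_2^{(\alpha)}$, so this is precisely the $k=2$ equation of \eqref{ELequation2} with $F=L-\lambda g$. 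Substituting $h_2=-\bigl(\partial g/\partial y_1\bigr)\bigl(\partial g/\partial y_2\bigr)^{-1}h_1$ into the last displayed identity and using the definition of $\lambda$, the integrand becomes $\Bigl[\bigl(\frac{\partial L}{\partial y_1}-\frac{d^\alpha}{dx^\alpha}\frac{\partial L}{\partial y_1^{(\alpha)}}\bigr)-\lambda\frac{\partial g}{\partial y_1}\Bigr]h_1$; as $h_1$ is an arbitrary continuous curve vanishing at the endpoints, Lemma~\ref{fundLemma} forces the bracket to vanish identically, and — $g$ being independent of $y_1^{(\alpha)}$ — this is the $k=1$ equation of \eqref{ELequation2}.

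I expect the main obstacle to be the first step: rigorously verifying that for every admissible $h_1$ one really obtains a one-parameter family of curves on the surface \eqref{subsconst}, i.e. that the implicit-function-theorem solution $y_2(x,\epsilon)$ is continuous in $x$, compatible with the fixed boundary data, and differentiable in $\epsilon$ with the claimed derivative $h_2$; and, secondarily, ensuring enough regularity (continuity of $\frac{d^\alpha}{dx^\alpha}\frac{\partial L}{\partial y_k^{(\alpha)}}$) for the fractional integration by parts and for $\lambda$ to be continuous. Once these points are secured, the remainder is routine Lagrange-multiplier bookkeeping.
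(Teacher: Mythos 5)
Your proposal is correct and follows essentially the same route as the paper's own proof: construct an admissible variation by solving the constraint for the second component via the implicit function theorem, derive the pointwise relation $\frac{\partial g}{\partial y_1}h_1+\frac{\partial g}{\partial y_2}h_2=0$, define $\lambda$ as the quotient $\bigl(\frac{\partial L}{\partial y_2}-\frac{d^\alpha}{dx^\alpha}\frac{\partial L}{\partial y_2^{(\alpha)}}\bigr)\big/\frac{\partial g}{\partial y_2}$, and conclude with the fundamental lemma. Your treatment of the implicit-function-theorem step is in fact slightly more careful than the paper's, but the argument is the same.
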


\begin{proof}
Let $y=(y_1,y_2)$, $\epsilon$ be a real,
and $(\hat y_1(x),\hat y_2(x))=y(x)+\epsilon h(x)$ with $h(x)=(h_1(x),h_2(x))$
a continuous curve such that $h(0)=h(1)=(0,0)$. By the implicit function theorem
it is possible to solve the equation $g(x,\hat y_1(x),\hat y_2(x))=0$
for $h_2$, \textrm{i.e.}, to write $h_2=h_2(\epsilon,h_1)$.
Let $j(\epsilon)=\mathcal{J}(\hat y_1(x),\hat y_2(x))$. Then $j'(0)=0$ and so
\begin{equation}
\label{subsequation}
\begin{array}{ll}
0 & = \displaystyle \int_0^1\left(  \frac{\partial L}{\partial y_1}h_1(x)
+\frac{\partial L}{\partial y_1^{(\alpha)}}h_1^{(\alpha)}(x)
+ \frac{\partial L}{\partial y_2}h_2(x)
+\frac{\partial L}{\partial y_2^{(\alpha)}}h_2^{(\alpha)}(x)\right) \,(dx)^\alpha.\\
&\\
&=\displaystyle \int_0^1\left( \left( \frac{\partial L}{\partial y_1}
-\frac{d^\alpha}{dx^\alpha} \frac{\partial L}{\partial y_1^{(\alpha)}}\right)h_1(x)
+ \left( \frac{\partial L}{\partial y_2}-\frac{d^\alpha}{dx^\alpha}
\frac{\partial L}{\partial y_2^{(\alpha)}}\right)h_2(x)\right) \,(dx)^\alpha.
\end{array}
\end{equation}
On the other hand, we required that $(\hat y_1(x),\hat y_2(x))$ satisfy
the condition \eqref{subsconst}. This means that
$g(x,\hat y_1(x),\hat y_2(x))=0$, and so it follows that
$$
\begin{array}{ll}
0 & = \displaystyle \left. \frac{d}{d\epsilon} g(x,\hat y_1(x),\hat y_2(x))\right|_{\epsilon=0}\\
&\\
&=\displaystyle  \frac{\partial g}{\partial y_1}h_1(x) + \frac{\partial g}{\partial y_2}h_2(x).
\end{array}
$$
Thus, we may write $h_2$ in the form
$$
h_2(x)=-\frac{\frac{\partial g}{\partial y_1}}{\frac{\partial g}{\partial y_2}}h_1(x).
$$
Define $\lambda$ as follows:
\begin{equation}
\label{subslambda}
\lambda(x)=\frac{ \frac{\partial L}{\partial y_2}
-\frac{d^\alpha}{dx^\alpha} \frac{\partial L}{\partial y_2^{(\alpha)}}}{\frac{\partial g}{\partial y_2}}.
\end{equation}
Then, we can rewrite equation \eqref{subsequation} as
$$
0 = \displaystyle \int_0^1 \left( \frac{\partial L}{\partial y_1}
-\frac{d^\alpha}{dx^\alpha} \frac{\partial L}{\partial y_1^{(\alpha)}}
-\lambda(x) \frac{\partial g}{\partial y_1}\right)h_1(x)\,(dx)^\alpha.
$$
By Lemma~\ref{fundLemma}, and since $h_1$ is an arbitrary curve, we deduce that
\begin{equation}
\label{subsEL}
\frac{\partial L}{\partial y_1}-\frac{d^\alpha}{dx^\alpha}
\frac{\partial L}{\partial y_1^{(\alpha)}}-\lambda(x) \frac{\partial g}{\partial y_1}=0.
\end{equation}
Let $F=L-\lambda g$. Combining equations \eqref{subslambda} and \eqref{subsEL}
we obtain formula \eqref{ELequation2}. This completes the proof of the theorem.
\end{proof}

We now state (without proof) our previous result in its general form.

\begin{thm}
\label{thm:hol:gf}
Let $\mathcal{J}$ be given by
$$
\mathcal{J}(\textbf{y})=\int_0^1L(x,\textbf{y}(x),\textbf{y}^{(\alpha)}(x)) \,(dx)^\alpha,
$$
where  $\textbf{y}=(y_1,\ldots,y_n)$ and $\textbf{y}^{(\alpha)}=(y_1^{(\alpha)},\ldots,y_n^{(\alpha)})$,
such that $y_k$, $k=1,\ldots,n$, are continuous functions defined on the set of curves
that satisfy the boundary conditions $\textbf{y}(0)=\bf{y_0}$ and $\textbf{y}(1)
=\bf{y_1}$ and satisfy the constraint $g(x,\textbf{y})=0$. If $\textbf{y}$ is an
extremizer for $\mathcal{J}$, and if $\partial g / \partial y_n\not=0$ for all
$x\in[0,1]$, then there exists a continuous function $\lambda(x)$
such that $\textbf{y}$ satisfy the Euler--Lagrange equations
$$
\frac{\partial F}{\partial y_k}
-\frac{d^\alpha}{dx^\alpha} \frac{\partial F}{\partial y_k^{(\alpha)}}=0\, ,
\quad k=1,\ldots,n \, ,
$$
for all $x\in[0,1]$, where $F=L-\lambda g$.
\end{thm}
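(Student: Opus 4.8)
The plan is to extend the argument of Theorem~\ref{thm:hol} from two dependent variables to $n$, the only substantive change being that now $n-1$ components of the variation are free rather than one. First I would fix arbitrary continuous curves $h_1,\ldots,h_{n-1}$ on $[0,1]$ vanishing at the endpoints, put $\hat y_k=y_k+\epsilon h_k$ for $k=1,\ldots,n-1$, and use the hypothesis $\partial g/\partial y_n\neq0$ on $[0,1]$ to solve the holonomic equation $g(x,\hat y_1(x),\ldots,\hat y_{n-1}(x),\hat y_n(x))=0$ for the last component via the implicit function theorem, obtaining $\hat y_n=\hat y_n(x,\epsilon)$ with $\hat y_n(x,0)=y_n(x)$; set $h_n(x)=\partial_\epsilon\hat y_n(x,\epsilon)|_{\epsilon=0}$. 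Differentiating the holonomic identity in $\epsilon$ at $\epsilon=0$ gives the pointwise relation $\sum_{k=1}^n\frac{\partial g}{\partial y_k}h_k=0$, whence
$$
h_n(x)=-\frac{1}{\partial g/\partial y_n}\sum_{k=1}^{n-1}\frac{\partial g}{\partial y_k}\,h_k(x).
$$
Evaluating the holonomic equation at $x=0$ and $x=1$, where $\hat y_k$ equals the prescribed boundary value $y_k^0$, resp. $y_k^1$, for $k<n$, forces $\hat y_n(0)=y_n^0$ and $\hat y_n(1)=y_n^1$ by local uniqueness, so $h_n(0)=h_n(1)=0$ as well, and every $h_k$ is a legitimate test function.

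Next I would set $j(\epsilon)=\mathcal J(\hat y_1,\ldots,\hat y_n)$, differentiate under the integral sign, use $j'(0)=0$, and apply the fractional integration by parts formula (justified since each $h_k$ vanishes at $0$ and $1$) to rewrite the first variation, exactly as in \eqref{subsequation}, as
$$
0=\int_0^1\sum_{k=1}^{n}\left(\frac{\partial L}{\partial y_k}-\frac{d^\alpha}{dx^\alpha}\frac{\partial L}{\partial y_k^{(\alpha)}}\right)h_k(x)\,(dx)^\alpha.
$$
Defining
$$
\lambda(x)=\frac{ \frac{\partial L}{\partial y_n}-\frac{d^\alpha}{dx^\alpha}\frac{\partial L}{\partial y_n^{(\alpha)}} }{ \partial g/\partial y_n }
$$
and substituting the formula for $h_n$, the $k=n$ summand turns into $-\lambda(x)\sum_{k=1}^{n-1}\frac{\partial g}{\partial y_k}h_k$, and the identity collapses to
$$
0=\int_0^1\sum_{k=1}^{n-1}\left(\frac{\partial L}{\partial y_k}-\frac{d^\alpha}{dx^\alpha}\frac{\partial L}{\partial y_k^{(\alpha)}}-\lambda(x)\frac{\partial g}{\partial y_k}\right)h_k(x)\,(dx)^\alpha.
$$

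To conclude, I would observe that $h_1,\ldots,h_{n-1}$ can be varied independently — taking all but one of them identically zero is admissible, since $h_n$ is then still determined by the displayed relation — so Lemma~\ref{fundLemma} applies one index at a time and yields $\frac{\partial L}{\partial y_k}-\frac{d^\alpha}{dx^\alpha}\frac{\partial L}{\partial y_k^{(\alpha)}}-\lambda\frac{\partial g}{\partial y_k}=0$ for $k=1,\ldots,n-1$; for $k=n$ this same equation holds by the very definition of $\lambda$. Since $g$ is independent of the fractional derivatives $y_k^{(\alpha)}$, putting $F=L-\lambda g$ converts all $n$ equations into the Euler--Lagrange equations \eqref{ELequation2} (now for $k=1,\ldots,n$), and continuity of $\lambda$ follows from that of the numerator together with the nonvanishing of the denominator $\partial g/\partial y_n$. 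I expect the main obstacle to be organizational rather than analytic: making precise, as was done for $n=2$, that the constrained variation genuinely leaves $h_1,\ldots,h_{n-1}$ free while keeping every component continuous and vanishing at the endpoints, so that the fundamental lemma may legitimately be invoked coordinatewise.
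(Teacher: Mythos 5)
The paper states Theorem~\ref{thm:hol:gf} explicitly without proof, indicating only that it is the general form of Theorem~\ref{thm:hol}; your argument is precisely the intended $n$-variable generalization of that proof (solve the constraint for the last component, express $h_n$ in terms of $h_1,\dots,h_{n-1}$, define $\lambda$ by the $n$-th Euler--Lagrange expression divided by $\partial g/\partial y_n$, and apply Lemma~\ref{fundLemma} coordinatewise), and it is correct. If anything, you are slightly more careful than the paper's two-variable proof in making $\hat y_n$ depend on $\epsilon$ through the implicit function theorem and in verifying that $h_n$ vanishes at the endpoints.
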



\section{Comparison with previous results in the literature}
\label{sec:Because_of_Reviewer2}

The fractional variational calculus, dealing with Jumarie's modified
Riemann--Liouville derivative, is still at the very beginning.
Results available in the literature reduce to those in \cite{Jumarie5}
(published in 2009) and \cite{AlmeidaMalinowskaTorres,withBasiaRachid1}
(published in 2010). In \cite{Jumarie5} the basic problem of the calculus
of variations with Jumarie's Riemann--Liouville derivative is stated
for the first time. The fractional Euler--Lagrange equation
is obtained under the presence of external forces,
and the implication of Jumarie's fractional calculus
in physical situations discussed in detail,
in particular Lagrangian mechanics of fractional order.
A fractional theory of the calculus of variations for multiple integrals,
in the sense of Jumarie, is proposed in \cite{AlmeidaMalinowskaTorres}:
fractional versions of the theorems of Green and Gauss,
multi-time fractional Euler--Lagrange equations,
and fractional natural boundary conditions are proved. As an application,
the fractional equation of motion of a vibrating string is investigated
\cite{AlmeidaMalinowskaTorres}. In \cite{withBasiaRachid1}, Euler--Lagrange
necessary optimality conditions for fractional problems of the calculus
of variations which are given by a composition of functionals,
within Jumarie's fractional calculus, are proved.
Optimality conditions for the product and the quotient
of Jumarie's fractional variational functionals are obtained
as particular cases \cite{withBasiaRachid1}.

In the present paper we develop further the theory initiated
by Jumarie in 2009 \cite{Jumarie5}, by considering other
fundamental problems of variational type than those treated before in
\cite{AlmeidaMalinowskaTorres,Jumarie5,withBasiaRachid1}.
First we consider the general form of optimality, when we do not impose
constraints on the boundaries, \textrm{i.e.}, when $y(0)$ and/or $y(1)$ are free
(Section~\ref{sec5}). If $y(0)$ is not specified, then the
transversality condition
$\displaystyle \frac{\partial L}{\partial y^{(\alpha)}}\left(0,y(0),y^{(\alpha)}(0)\right)=0$
complements Jumarie's Euler--Lagrange equation;
if $y(1)$ is not specified, then the
transversality condition
$\displaystyle\frac{\partial L}{\partial y^{(\alpha)}}\left(1,y(1),y^{(\alpha)}(1)\right)=0$
holds together with Jumarie's Euler--Lagrange equation.
We then study fractional variational problems via Jumarie's
modified Riemann--Liouville derivative under the presence
of certain constraints, thus limiting the space of continuous functions
in which we search the extremizers. Two types of problems
were considered here for the first time in the literature:
fractional isoperimetric problems where the constraint also contains
Jumarie's modified Riemann--Liouville derivative
(Section~\ref{sec6}); and variational problems of Jumarie type
subject to  holonomic constraints (Section~\ref{sec7}).
The results of the paper are valid for extremizers
which are not differentiable.

As future work, we plan to study
nondifferentiable fractional variational problems under
the presence of nonholonomic constraints, which depend
on Jumarie's modified Riemann--Liouville derivative,
and possible applications in physics and economics.


\subsection*{Acknowledgments}

Work supported by the
{\it Center for Research and Development in Mathematics and Applications} (CIDMA)
from the ``Funda\c{c}\~{a}o para a Ci\^{e}ncia e a Tecnologia'' (FCT),
cofinanced by the European Community Fund FEDER/POCI 2010.
The authors are very grateful to two referees
for valuable remarks and comments,
which improved the quality of the paper.


{\small

}



\begin{thebibliography}{xx}

\bibitem{AGRA1}
O. P. Agrawal,
Formulation of Euler-Lagrange equations for fractional variational problems,
J. Math. Anal. Appl. {\bf 272} (2002), no.~1, 368--379.

\bibitem{agrawalCap}
O. P. Agrawal,
Generalized Euler-Lagrange equations and transversality conditions
for FVPs in terms of the Caputo derivative,
J. Vib. Control {\bf 13} (2007), no.~9-10, 1217--1237.

\bibitem{AlmeidaMalinowskaTorres}
R. Almeida, A. B. Malinowska and D. F. M. Torres,
A fractional calculus of variations
for multiple integrals with application to vibrating string,
J. Math. Phys. {\bf 51} (2010) 033503, 12pp.
{\tt arXiv:1001.2722}

\bibitem{Ric:Del:ISO}
R. Almeida and D. F. M. Torres,
H\"{o}lderian variational problems subject
to integral constraints,
J. Math. Anal. Appl. {\bf 359} (2009), no.~2, 674--681.
{\tt arXiv:0807.3076}

\bibitem{Ric:Del:ISO:nabla}
R. Almeida and D. F. M. Torres,
Isoperimetric problems on time scales with nabla derivatives,
J. Vib. Control {\bf 15} (2009), no.~6, 951--958.
{\tt arXiv:0811.3650}

\bibitem{AlmeidaTorres}
R. Almeida and D. F. M. Torres,
Calculus of variations with fractional derivatives and fractional integrals,
Appl. Math. Lett. {\bf 22} (2009), no.~12, 1816--1820.
{\tt arXiv:0907.1024}

\bibitem{MyID:154}
R. Almeida and D. F. M. Torres,
Leitmann's direct method for fractional optimization problems,
Appl. Math. Comput. {\bf 217} (2010), no.~3, 956--962.
{\tt arXiv:1003.3088}

\bibitem{MyID:147}
R. Almeida and D. F. M. Torres,
Necessary and sufficient conditions for the fractional
calculus of variations with Caputo derivatives,
Commun. Nonlinear Sci. Numer. Simul. {\bf 16} (2011), no.~3, 1490--1500.
{\tt arXiv:1007.2937}

\bibitem{Ata:et:al}
T. M. Atanackovi\'c, S. Konjik\ and\ S. Pilipovi\'c,
Variational problems with fractional derivatives:
Euler-Lagrange equations, J. Phys. A {\bf 41} (2008),
no.~9, 095201, 12 pp.

\bibitem{Baleanu:Agrawal}
D. Baleanu\ and\ Om. P. Agrawal,
Fractional Hamilton formalism within Caputo's derivative,
Czechoslovak J. Phys. {\bf 56} (2006), no.~10-11, 1087--1092.

\bibitem{Baleanu}
D. Baleanu\ and\ S. I. Muslih,
Lagrangian formulation of classical fields within Riemann-Liouville
fractional derivatives, Phys. Scripta {\bf 72} (2005), no.~2-3, 119--121.

\bibitem{MyID:152}
N. R. O. Bastos, R. A. C. Ferreira\ and\ D. F. M. Torres,
Necessary optimality conditions for fractional difference problems
of the calculus of variations,
Discrete Contin. Dyn. Syst. {\bf 29} (2011), no.~2, 417--437.
{\tt arXiv:1007.0594}

\bibitem{MyID:179}
N. R. O. Bastos, R. A. C. Ferreira\ and\ D. F. M. Torres,
Discrete-time fractional variational problems,
Signal Process. {\bf 91} (2011), no.~3, 513--524.
{\tt arXiv:1005.0252}

\bibitem{Cresson}
J. Cresson,
Fractional embedding of differential operators and Lagrangian systems,
J. Math. Phys. {\bf 48} (2007), no.~3, 033504, 34 pp.

\bibitem{El-Nabulsi:Torres}
R. A. El-Nabulsi\ and\ D. F. M. Torres,
Fractional actionlike variational problems,
J. Math. Phys. {\bf 49} (2008), no.~5, 053521, 7 pp.
{\tt arXiv:0804.4500}

\bibitem{Filatova}
D. Filatova, M. Grzywaczewski and N. Osmolovskii,
Optimal control problem with an integral equation as the control object
Nonlinear Anal. {\bf 72} (2010), no.~3-4, 1235--1246.

\bibitem{frederico}
G. S. F. Frederico\ and\ D. F. M. Torres,
Fractional conservation laws in optimal control theory,
Nonlinear Dynam. {\bf 53} (2008), no.~3, 215--222.
{\tt arXiv:0711.0609}

\bibitem{MyID:149}
G. S. F. Frederico\ and\ D. F. M. Torres,
Fractional Noether's theorem in the Riesz-Caputo sense,
Appl. Math. Comput. {\bf 217} (2010), no.~3, 1023--1033.
{\tt arXiv:1001.4507}

\bibitem{Jumarie1}
G. Jumarie, On the representation of fractional Brownian motion
as an integral with respect to $({\rm d}t)\sp a$,
Appl. Math. Lett. {\bf 18} (2005), no.~7, 739--748.

\bibitem{Jumarie2}
G. Jumarie, Modified Riemann-Liouville derivative and fractional
Taylor series of nondifferentiable functions further results,
Comput. Math. Appl. {\bf 51} (2006), no.~9-10, 1367--1376.

\bibitem{Jumarie:2007}
G. Jumarie,
Fractional partial differential equations
and modified Riemann-Liouville derivative new methods for solution,
J. Appl. Math. Comput. {\bf 24} (2007), no.~1-2, 31--48.

\bibitem{Jumarie4}
G. Jumarie, Stock exchange fractional dynamics defined as fractional exponential
growth driven by (usual) Gaussian white noise.
Application to fractional Black-Scholes equations,
Insurance Math. Econom. {\bf 42} (2008), no.~1, 271--287.

\bibitem{Jumarie7}
G. Jumarie, Modeling fractional stochastic systems
as non-random fractional dynamics driven by Brownian motions,
Appl. Math. Model. {\bf 32} (2008), no.~5, 836--859.

\bibitem{Jumarie3}
G. Jumarie, Table of some basic fractional calculus formulae derived
from a modified Riemann-Liouville derivative for non-differentiable functions,
Appl. Math. Lett. {\bf 22} (2009), no.~3, 378--385.

\bibitem{Jumarie5}
G. Jumarie, From Lagrangian mechanics fractal in space to space
fractal Schr\"{o}dinger's equation via fractional Taylor's series,
Chaos Solitons Fractals {\bf 41} (2009), no.~4, 1590--1604.

\bibitem{Jumarie:AML2}
G. Jumarie,
Laplace's transform of fractional order via the Mittag-Leffler
function and modified Riemann-Liouville derivative,
Appl. Math. Lett. {\bf 22} (2009), no.~11, 1659--1664.

\bibitem{Jumarie6}
G. Jumarie, An approach via fractional analysis
to non-linearity induced by coarse-graining in space,
Nonlinear Anal. Real World Appl. {\bf 11} (2010), no.~1, 535--546.

\bibitem{Jumarie:2010}
G. Jumarie,
Analysis of the equilibrium positions of nonlinear dynamical systems
in the presence of coarse-graining disturbance in space,
J. Appl. Math. Comput. {\bf 32} (2010), no.~2, 329--351.

\bibitem{Jumarie8}
G. Jumarie,
Cauchy's integral formula via modified Riemann–-Liouville
derivative for analytic functions of fractional order,
Appl. Math. Lett. {\bf 23} (2010), no.~12, 1444--1450.

\bibitem{Klimek}
M. Klimek,
Lagrangian fractional mechanics---a noncommutative approach,
Czechoslovak J. Phys. {\bf 55} (2005), no.~11, 1447--1453.

\bibitem{MR2109983}
K. M. Kolwankar,
Decomposition of Lebesgue-Cantor devil's staircase,
Fractals {\bf 12} (2004), no.~4, 375--380.

\bibitem{withBasiaRachid1}
A. B. Malinowska, M. R. Sidi Ammi\ and\ D. F. M. Torres,
Composition functionals in fractional calculus of variations,
Commun. Frac. Calc. {\bf 1} (2010), no.~1, 32--40.
{\tt arXiv:1009.2671}

\bibitem{Malina}
A. B. Malinowska\ and\ D. F. M. Torres,
Necessary and sufficient conditions
for local Pareto optimality on time scales,
Journal of Mathematical Sciences {\bf 161} (2009),
no.~6, 803--810.
{\tt arXiv:0801.2123}

\bibitem{MyID:169}
A. B. Malinowska\ and\ D. F. M. Torres,
Generalized natural boundary conditions for fractional variational problems
in terms of the Caputo derivative,
Comput. Math. Appl. {\bf 59} (2010), no.~9, 3110--3116.
{\tt arXiv:1002.3790}

\bibitem{Miller}
K. S. Miller\ and\ B. Ross, {\it An introduction to the fractional
calculus and fractional differential equations}, Wiley, New York, 1993.

\bibitem{MyID:163}
D. Mozyrska\ and\ D. F. M. Torres,
Minimal modified energy control for fractional
linear control systems with the Caputo derivative,
Carpathian J. Math. {\bf 26} (2010), no.~2, 210--221.
{\tt arXiv:1004.3113}

\bibitem{MyID:181}
D. Mozyrska\ and\ D. F. M. Torres,
Modified optimal energy and initial memory
of fractional continuous-time linear systems,
Signal Process. {\bf 91} (2011), no.~3, 379--385.
{\tt arXiv:1007.3946}

\bibitem{Podlubny}
I. Podlubny, {\it Fractional differential equations},
Academic Press, San Diego, CA, 1999.

\bibitem{Ross:Samko:Love}
B. Ross, S. G. Samko\ and\ E. R. Love,
Functions that have no first order derivative might have fractional derivatives
of all orders less than one, Real Anal. Exchange {\bf 20} (1994/95), no.~1, 140--157.

\bibitem{samko}
S. G. Samko, A. A. Kilbas\ and\ O. I. Marichev,
{\it Fractional integrals and derivatives},
Translated from the 1987 Russian original,
Gordon and Breach, Yverdon, 1993.

\bibitem{Brunt}
B. van Brunt, {\it The calculus of variations},
Springer, New York, 2004.

\end{thebibliography}
\end{document}